\documentclass[12pt]{amsart}%
\usepackage{amsmath}
\usepackage{amsfonts}
\usepackage{amssymb}
\usepackage{graphicx}%
\setcounter{MaxMatrixCols}{30}
\providecommand{\U}[1]{\protect\rule{.1in}{.1in}}
\newtheorem{theorem}{Theorem}

\newtheorem{corollary}[theorem]{Corollary}

\newtheorem{lemma}[theorem]{Lemma}

\begin{document}

\title[Asymptotic Plateau's problem on rank 1 symmetric
spaces]{On the asymptotic Plateau's problem for CMC hypersurfaces on rank 1 symmetric
spaces of noncompact type}
\author[Jean-Baptiste Casteras]{Jean-Baptiste Casteras}
\address{UFRGS, Instituto de Matem\'atica, Av. Bento Goncalves 9500, 91540-000 Porto
Alegre-RS, Brasil.}
\email{jean-baptiste.casteras@univ-brest.fr}
\author[Jaime B. Ripoll]{Jaime B. Ripoll}
\email{jaime.ripoll@ufrgs.br}
\thanks{The first author was supported by the CNPq (Brazil) project 501559/2012-4. The second author was supported by the CNPq (Brazil) project 302955/2011-9.}
\date{}

\begin{abstract}
Let $M^{n},$ $n\geq3,$ be a Hadamard manifold with strictly negative sectional
curvature $K_{M}\leq-\alpha,$ $\alpha>0$. Assume that $M$ satisfies the
\emph{strict convexity condition }at infinity according to \cite{RTe} (see also the definition below) and, additionally, that $M$ admits a
\emph{helicoidal} one parameter subgroup $\left\{  \varphi_{t}\right\}  $ of
isometries (i.e. there exists a geodesic $\gamma$ of $M$ such that $\varphi
_{t}\left(  \gamma(s)\right)  =\gamma(t+s)$ for all $s,t\in\mathbb{R}$)$.$ We
then prove that, given a compact topological $\left\{  \varphi_{t}\right\}  -$shaped hypersurface $\Gamma$ in the
asymptotic boundary $\partial_{\infty}M$ of $M$ (that is,  the orbits of
the extended action of $\left\{  \varphi_{t}\right\}  $ to $\partial_{\infty
}M$ intersect $\Gamma$ at one and only one point), and given $H\in\mathbb{R}$, $\left\vert H\right\vert
<\sqrt{\alpha},$ there exists a complete properly embedded constant mean
curvature (CMC) $H$ hypersurface $S$ of $M$ such that $\partial_{\infty
}S=\Gamma.$

This result extends Theorem 1.8 of B. Guan and J. Spruck \cite{GS} to more general ambient spaces, as rank $1$ symmetric spaces of noncompact type, and allows  $\Gamma$ to be $\left\{  \varphi_{t}\right\}  -$shaped with respect to more general one parameter subgroup of isometries $\left\{  \varphi_{t}\right\}$ of the ambient space.  For example, in $\mathbb{H}^n$, $\Gamma$ can be \emph{loxodromic }$-$shaped, where loxodromic is a curve in  $\mathbb{S}^{n-1}=\partial_{\infty}\mathbb{H}^n$  that makes a constant angle with a family of circles connecting two points of $\mathbb{S}^{n-1}.$
A fundamental result used to prove our main theorem, which has
interest on its own, is the extension of the interior gradient estimates for
CMC Killing graphs proved in Theorem 1 of \cite{DLR} to CMC graphs of Killing submersions.

\end{abstract}

\maketitle

\section{Introduction}

\qquad Let $M^{n}$ be a Cartan-Hadamard manifold (namely a simply connected,
complete Riemannian manifold with nonpositive sectional curvature) of
dimension $n\geq3.$

The asymptotic boundary $\partial_{\infty}M$ of $M$ is defined as the set of
all equivalence classes of unit speed geodesic rays in $M$; two such rays
$\gamma_{1},\gamma_{2}:\left[  0,\infty\right)  \rightarrow M$ are equivalent
if $\sup_{t\geq0}d\left(  \gamma_{1}(t),\gamma_{2}(t)\right)  <\infty$, where
$d$ is the Riemannian distance in $M.$ The so called \emph{geometric}
compactification $\overline{M}$ of $M$ is then given by $\overline{M}%
:=M\cup\partial_{\infty}M,$ endowed with the cone topology (see \cite{EO} or \cite{SY}, Ch. 2). For any subset $S\subset M$, we define $\partial
_{\infty}S=\partial_{\infty}M\cap\overline{S}.$

The asymptotic Plateau problem for $k$ $(\geq2)$ dimensional area minimizing
submanifolds in $M$ consists in finding, for a given a $k-1$ dimensional, closed,
topological submanifold $\Gamma$ of $\partial_{\infty}M$, a locally area
minimizing$,$ complete submanifold $S^{k}$ of $M$ such that $\partial_{\infty
}S=\Gamma$.

By using methods from the Geometric Measure Theory, this problem was first studied
in the hyperbolic space by M.T. Anderson \cite{A} and his results extended to Gromov
hyperbolic manifolds by U. Lang and V. Bangert (\cite{BL}, \cite{La2},
\cite{La1}).

Within the framework of the classical Plateau problem, the second author of the
present paper with F. Tomi \cite{RT} study the asymptotic problem for
minimal disk type surfaces in a general Hadamard manifold $M$.

In codimension $1,$ given $H\in\mathbb{R}$, we may consider the asymptotic
Plateau's problem for the constant mean curvature (CMC)\ $H$ hypersurface in
$M,$ namely, given a compact topological hypersurface $\Gamma\subset
\partial_{\infty}M,$  find a complete CMC $H$ hypersurface $S$ of $M$
($H-$hypersurface, for short) such that $\partial_{\infty}S=\Gamma.$ This
problem has also attracted the attention of many mathematicians more recently.
The results of M.T. Anderson \cite{A} have been extended to the CMC case by Y.
Tonegawa \cite{T} and H. Alencar and H. Rosenberg in \cite{AR}.

Both Geometric Measure theory and Plateau's technique are methods that lead,
in general, to the existence of hypersurfaces with singularities. Thus, a natural question,
raised by B. Guan and J. Spruck in \cite{GS}, asks about the existence of a smooth
constant mean curvature hypersurface asymptotic to $\Gamma$ at infinity in
$\mathbb{H}^{n}$.   This problem in fact had already been studied earlier in the
minimal case by F. H. Lin \cite{L}.

  A way to obtain smooth solutions is by finding a suitable system of coordinates in order to write the hypersurface as a graph, and then to use standard elliptic PDE methods. In \cite{L}, F.H. Lin represented the hypersurfaces in the half space model of $\mathbb{H}^{n}$ as vertical graphs, that is, in the usual way of $\mathbb{R}^{n}_{+}$ when using the cartesian system of coordinates.

The results of F.H. Lin \cite{L} were extended to the CMC case by B. Nelli
and J. Spruck in \cite{NS} where they proved the existence of a smooth CMC
$\left\vert H\right\vert <1$ hypersurface in the hyperbolic space
$\mathbb{H}^{n}$ with sectional curvature $-1$ if $\Gamma$ is assumed to be
convex and compact. Later, also using PDE's techniques, B. Guan and J. Spruck
\cite{GS} (see also \cite{SS} for a different approach based on a variational
method) improved the convexity condition by requiring a starshaped property of
$\Gamma$.  We refer the reader to
the nice survey of B. Coskunuzer \cite{Cos}, where the references of many
other closely related papers to this subject can be found.

In both papers \cite{NS} and \cite{GS} the authors used the underlying
Euclidean structure of the half space model for $\mathbb{H}^{n}$ to state the
convexity and starshaped properties of $\Gamma.$ However, although the
convexity is not an intrinsic notion of the hyperbolic geometry, the starshapness
of $\Gamma$ is. It can be formulated in intrinsic terms using the conformal
structure of $\overline{\mathbb{H}}^{n}$ by requiring $\Gamma$ to be
\textquotedblleft circle shaped\textquotedblright, meaning that there are two
points $p_{1},p_{2}\in\mathbb{S}^{n-1}=\partial_{\infty}\mathbb{H}^{n}$ such
that any arc of circle from $p_{1}$ to $p_{2}$ intersects $\Gamma$ at one and
only one point. A limit  circle shaped condition, where $p_{1}=p_{2},$ was also
introduced and used by the second author in \cite{R} to ensure the existence
of a smooth CMC hypersurface having $\Gamma$ as asymptotic boundary (see the
Introduction and Theorem 6 of \cite{R} for a detailed description of this case).

In the present paper we extend Theorem 1.8 of \cite{GS} in two directions.
First, we allow $\Gamma$ to be \textquotedblleft shaped\textquotedblright%
\ with respect to a more general one parameter subgroup of conformal
diffeomorphisms of $\mathbb{S}^{n-1}=\mathbb{\partial}_{\infty}\mathbb{H}%
^{n}.$ Secondly, we allow the ambient space to be any rank 1 symmetric space
of noncompact type. 
Both results are consequences of a more general theorem
that holds in a Hadamard manifold endowed with some special Killing field. 

As we shall see in the proof ahead, the Killing field allows to introduce a special system of coordinates which is quite suitable for using standard elliptic PDE techniques. To
write down precise statements we first introduce some general notions and terminology.

Let $\gamma:\left(  -\infty,\infty\right)  \rightarrow M$ be an arc length
geodesic. We say that a one parameter subgroup of isometries $\left\{
\varphi_{t}^{\gamma}\right\}  $ of $M$ associated to $\gamma$ is
\emph{helicoidal} if
$\varphi_{t}^{\gamma}\left(  \gamma\left(  s\right)  \right)  =\gamma\left(
t+s)\right)  $ for all $s,t\in\mathbb{R}$.
In the sequel, since there is no possibility of confusion, we shall omit the
dependance of $\left\{  \varphi_{t}^{\gamma}\right\}  $ with respect to the
geodesic $\gamma$.

 Let us illustrate the previous definition with a simple case that justifies this terminology:
If $M=\mathbb{R}^{3}$ then any helicoidal one parameter subgroup of
isometries, up to a conjugation, is of the form%
\[
\varphi_{t}\left(  x,y,z\right)  =\left(  \left[
\begin{array}
[c]{cc}%
\cos at & \sin at\\
-\sin at & \cos at
\end{array}
\right]  \left[
\begin{array}
[c]{c}%
x\\
y
\end{array}
\right]  ,z+t\right)
\]
for some $a\in\mathbb{R}$. When $a=0$, $\left\{  \varphi_{t}\right\}  $ is a
one parameter subgroup of transvections along the $z-$axis. More generally, a
one parameter subgroup of transvections along a geodesic in a symmetric space
(see \cite{He}) is a particular case of helicoidal one parameter subgroup of
isometries.

Since the equivalence relation between geodesics and convergent sequences are
preserved under isometries, the action of $\left\{  \varphi_{t}\right\}  $ on
$M$ extends to the compactification $\overline{M}$ of $M$ and the extended
action is continuous. The orbits of $\left\{  \varphi_{t}\right\}  $ are the
curves $O(x):=\left\{  \varphi_{t}(x)\text{
$\vert$
}t\in\mathbb{R}\right\}  $ where $x\in\overline{M}.$ Observe that $\left\{
\varphi_{t}\right\}  $ has two singular orbits in $\overline{M},$ namely,
$O\left(  \gamma\left(  \pm\infty\right)  \right)  ,$ where $\gamma$ is the
geodesic translated by $\left\{  \varphi_{t}\right\}  .$

Finally, we will also need to use the \emph{Strictly Convexity Condition
}(\textquotedblleft SC condition\textquotedblright) introduced in \cite{RT}.
We say that $M$ satisfies the SC condition if, given $x\in\partial_{\infty}M$
and a relatively open subset $W\subset\partial_{\infty}M$ containing $x$,
there exists a $C^{2}$ open set $\Omega\subset\overline{M}$ such that
$x\in\operatorname*{Int}(\partial_{\infty}\Omega)\subset W$ and $M\backslash
\Omega$ is convex, where $\operatorname*{Int}(\partial_{\infty}\Omega)$ stands
for the interior of $\partial_{\infty}\Omega$ in $\partial_{\infty}M$.

We are now in position to state our main result :

\begin{theorem}
\label{main}Let $M$ be a Hadamard manifold with sectional curvature $K_{M}%
\leq-\alpha$, for some $\alpha>0$, satisfying the SC condition$.$ Let
$\left\{  \varphi_{t}\right\}  $ be a helicoidal one parameter subgroup of
isometries of $M$. Let $\Gamma\subset\partial_{\infty}M$ be a compact
topological embedded $\left\{  \varphi_{t}\right\}  -$shaped hypersurface of $\partial_{\infty}M,$ that is, any
nonsingular orbit of $\left\{  \varphi_{t}\right\}  $ in $\partial_{\infty}M$
intersects $\Gamma$ at one and only one point. Then, given $H\in\mathbb{R}$,
$\left\vert H\right\vert <\sqrt{\alpha},$ there exists a
complete, properly embedded $H-$hypersurface $S$ of $M$ such that
$\partial_{\infty}S=\Gamma.$ Moreover any orbit of $\left\{  \varphi
_{t}\right\}  $ intersects $S$ at one and only one point.
\end{theorem}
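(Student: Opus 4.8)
The plan is to realize $S$ as a graph over the space of orbits of $\left\{\varphi_{t}\right\}$ and to solve the resulting constant mean curvature equation by an exhaustion and limiting procedure. Let $X$ denote the Killing field generating $\left\{\varphi_{t}\right\}$; since the only singular orbits lie at infinity, $X$ is nowhere vanishing on $M$, so its integral curves foliate $M$ by complete flow lines and the orbit space $B:=M/\left\{\varphi_{t}\right\}$ is an $(n-1)$-dimensional base. For a function $u$ on a domain of $B$ I would define its Killing graph $G[u]=\left\{\varphi_{u(p)}(p)\right\}$. Because $\Gamma$ meets every nonsingular orbit of $\partial_{\infty}M$ exactly once, and every interior orbit is nonsingular, the desired hypersurface $S$ should itself be transverse to $X$ and meet each orbit once, i.e. be the graph of a globally defined $u\colon B\to\mathbb{R}$; its being CMC $H$ translates into a quasilinear elliptic equation of mean-curvature type for $u$, which is precisely the Killing-submersion equation for which the interior gradient estimate announced above is available.

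Next I would set up the exhaustion. Choose an increasing sequence of relatively compact domains $\Omega_{k}\subset B$ with smooth boundaries, exhausting $B$, and solve on each $\Omega_{k}$ the Dirichlet problem for the CMC $H$ equation with boundary data chosen so that the asymptotic boundaries $\partial_{\infty}G[u_{k}]$ tend to $\Gamma$. Solvability on each bounded domain follows from the method of continuity once a priori estimates are in place: height estimates come from trapping the graph between barriers, boundary gradient estimates come from local barriers built along $\partial\Omega_{k}$, and the decisive interior gradient estimate is furnished by the Killing-submersion extension of Theorem 1 of \cite{DLR}. With uniform interior $C^{1}$ bounds, Schauder theory upgrades to $C^{2,\beta}$ bounds on compact subsets, so a subsequence of the graphs $G[u_{k}]$ converges on compact sets to a complete $H$-hypersurface $S$.

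The delicate point — and the place where both the curvature hypothesis $\left\vert H\right\vert<\sqrt{\alpha}$ and the SC condition enter decisively — is controlling the behavior of $S$ at infinity so as to guarantee $\partial_{\infty}S=\Gamma$ rather than a smaller or larger set. Here I would use the SC condition to produce, for each $x\in\Gamma$ and each relatively open neighborhood $W$, a $C^{2}$ set $\Omega$ with $M\setminus\Omega$ convex and $x\in\operatorname{Int}(\partial_{\infty}\Omega)\subset W$; the boundary $\partial\Omega$ then serves as a barrier confining $S$ near $x$. The inequality $\left\vert H\right\vert<\sqrt{\alpha}$ is exactly what makes geodesic spheres usable as mean-curvature barriers dominating $H$, since under $K_{M}\le-\alpha$ their principal curvatures are at least $\sqrt{\alpha}\coth(\sqrt{\alpha}\,r)>\sqrt{\alpha}$; this yields the height control and forces the graphs to stay away from the singular orbits while converging to $\Gamma$. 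Passing these barriers to the limit shows $\partial_{\infty}S=\Gamma$.

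Finally, the graph structure yields the remaining assertions directly: $S$ meets each orbit of $\left\{\varphi_{t}\right\}$ at exactly one point by construction, and a complete Killing graph is properly embedded, with one-sided uniqueness along orbits following from the tangency principle applied to the $\left\{\varphi_{t}\right\}$-equivariant comparison. I expect the main obstacle to be precisely the asymptotic barrier construction of the previous paragraph — reconciling the $\left\{\varphi_{t}\right\}$-equivariant graph coordinates with the SC condition so that the limit hypersurface neither collapses onto a singular orbit nor overshoots $\Gamma$ — together with verifying that the Killing-graph equation stays uniformly elliptic on the exhausting domains up to where the interior gradient estimate applies.
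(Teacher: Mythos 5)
Your proposal follows essentially the same route as the paper: realize $S$ as a Killing graph over a section of the orbit foliation, solve Dirichlet problems on an exhaustion by invoking the Killing-graph existence theory together with the interior gradient estimate, and control the asymptotic boundary with barriers supplied by the SC condition — with the one small caveat that the paper's mechanism for $|H|<\sqrt{\alpha}$ is the mean curvature $\sqrt{\alpha}\tanh(\sqrt{\alpha}\,d)$ of equidistants of the unbounded convex sets (approaching $\sqrt{\alpha}$ from below), not geodesic spheres with $\coth$. The only step you gloss over that the paper treats explicitly is verifying that $P=\exp_{o}\{Y(o)\}^{\perp}$ is a genuine global Killing section, i.e. that every orbit meets it exactly once and transversally.
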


We point out that the SC condition is satisfied by a large class of manifolds.
For example, if the sectional curvature is bounded from above by a strictly
negative constant and decreases at most exponentially (see Theorem $14$ of
\cite{RTe}) then the SC condition is satisfied. In particular, it is satisfied by any rank 1 symmetric spaces of
noncompact type. Therefore, as an immediate consequence of the previous
theorem, we obtain:

\begin{corollary}
\label{cor1}Assume that $M$ is a rank 1 symmetric space of noncompact type and
assume that the sectional curvature of $M$ is bounded by $-\alpha,$
$\alpha>0.$ Let $\left\{  \varphi_{t}\right\}  $ be a one parameter of
transvections of $M$. Let $\Gamma\subset\partial_{\infty}M$ be a compact
embedded topological $\left\{  \varphi_{t}\right\}  -$shaped hypersurface of $\partial_{\infty}M.$  Then, given $H\in\mathbb{R}$, $\left\vert H\right\vert <\sqrt{\alpha},$ there exists a complete, properly embedded
$H-$hypersurface $S$ of $M$ such that $\partial_{\infty}S=\Gamma.$ Moreover
any orbit of $\left\{  \varphi_{t}\right\}  $ intersects $S$ at one and only
one point.
\end{corollary}

Finally we point out an interesting corollary of Theorem \ref{main} in the
case where $M=\mathbb{H}^{n}$, the hyperbolic space of constant sectional
curvature $-1$. Recalling that a \emph{loxodromic} \emph{curve} is a curve in
$\mathbb{S}^{n-1}$ that intersects with a constant angle $\theta$ any arc of circle of
$\mathbb{S}^{n-1}$ connecting two fixed points of $\mathbb{S}^{n-1}$ (see
\cite{Lox}). These curves are induced by one-parameter subgroups of isometries of  $\mathbb{H}^{n}$ of helidoidal type. For example, in the half space model ${z>0}$ of $\mathbb{H}^{3}$, up to conjugation, they are of the form
\[
\varphi_{t}\left(  x,y,z\right)  =e^{t}\left(  \left[
\begin{array}
[c]{cc}%
\cos \theta t   & \sin  \theta  t \\
-\sin  \theta t   & \cos  \theta t
\end{array}
\right]  \left[
\begin{array}
[c]{c}%
x\\
z
\end{array}
\right]  ,z\right).
\]

\begin{corollary}
\label{cor2}
Let $0\leq\theta<\pi/2$ and $p_{1},p_{2}\in\mathbb{S}^{n-1}=\partial_{\infty
}\mathbb{H}^{n}$ be two distinct points of $\mathbb{S}^{n-1}.$ Let $L_{\theta
}$ be the family of loxodromic curves that intersects any arc of circle from
$p_{1}$ to $p_{2}$ with a constant angle $\theta.$ Let $\Gamma\subset
\mathbb{S}^{n-1}$ be a compact embedded topological $L_{\theta
}-$shaped hypersurface of
$\mathbb{S}^{n-1}.$ Then, given $H\in\mathbb{R}$, $\left\vert H\right\vert <1,$ there exists a complete, properly embedded $H-$hypersurface $S$ of
$\mathbb{H}^{n}$ such that $\partial_{\infty}S=\Gamma.$
\end{corollary}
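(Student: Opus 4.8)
The plan is to show that Corollary \ref{cor2} is a direct application of Theorem \ref{main} with $M=\mathbb{H}^n$. The essential point is to verify that the hyperbolic space, together with the loxodromic one-parameter subgroups $\{\varphi_t\}$, satisfies all the hypotheses of the main theorem, so that the conclusion follows immediately. First I would note that $\mathbb{H}^n$ is a Hadamard manifold with constant sectional curvature $-1$, so we may take $\alpha=1$; the condition $|H|<\sqrt{\alpha}=1$ in Theorem \ref{main} becomes exactly the hypothesis $|H|<1$ stated here. Moreover, since $\mathbb{H}^n$ is a rank $1$ symmetric space of noncompact type (indeed, the simplest one), the remark preceding Corollary \ref{cor1} guarantees that it satisfies the SC condition. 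Thus the curvature and SC-condition hypotheses of Theorem \ref{main} are automatic.

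The heart of the matter is to identify the loxodromic family $L_\theta$ with the nonsingular orbits of a helicoidal one-parameter subgroup of isometries, so that the notion of an $L_\theta$-shaped hypersurface coincides with that of a $\{\varphi_t\}$-shaped hypersurface in the sense of Theorem \ref{main}. I would proceed as follows. Given the two distinct points $p_1,p_2\in\mathbb{S}^{n-1}$, choose coordinates (the half-space model) so that $p_1,p_2$ correspond to the two fixed points, and let $\gamma$ be the unique complete geodesic of $\mathbb{H}^n$ with $\gamma(-\infty)=p_1$ and $\gamma(+\infty)=p_2$. The transvections along $\gamma$ together with the rotations fixing $\gamma$ generate a two-parameter abelian group; the helicoidal subgroup $\{\varphi_t\}$ is the one-parameter subgroup obtained by combining the transvection of unit speed with a rotation of angular speed $\theta$, precisely as displayed before the statement of Corollary \ref{cor2} in the case $n=3$. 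By construction $\varphi_t(\gamma(s))=\gamma(t+s)$, so $\{\varphi_t\}$ is helicoidal in the sense of the paper, with singular orbits $O(\gamma(\pm\infty))=\{p_1\},\{p_2\}$.

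Next I would check that the extended action of $\{\varphi_t\}$ on $\partial_\infty\mathbb{H}^n=\mathbb{S}^{n-1}$ has exactly the loxodromic curves of $L_\theta$ as its nonsingular orbits. Each arc of circle from $p_1$ to $p_2$ in $\mathbb{S}^{n-1}$ is the asymptotic boundary trace of a totally geodesic half-plane containing $\gamma$, and these arcs are permuted by the rotational part of $\{\varphi_t\}$; as $t$ varies, $\varphi_t$ translates along $\gamma$ while simultaneously rotating by angle $\theta t$, so the extended boundary action carries a point of $\mathbb{S}^{n-1}\setminus\{p_1,p_2\}$ along a curve meeting every such arc at the constant angle $\theta$, which is exactly the defining property of a loxodromic curve in $L_\theta$. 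Hence the orbit $O(x)$ for a nonsingular $x$ is precisely a loxodromic curve of $L_\theta$, and the hypothesis that $\Gamma$ is $L_\theta$-shaped (each loxodromic curve meets $\Gamma$ once) is identical to the hypothesis that $\Gamma$ is $\{\varphi_t\}$-shaped in Theorem \ref{main}. With all hypotheses verified, Theorem \ref{main} supplies a complete, properly embedded $H$-hypersurface $S$ with $\partial_\infty S=\Gamma$, completing the proof.

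The step I expect to be the main obstacle is the geometric verification of the orbit identification in the third paragraph: one must confirm carefully that the extended boundary action of the helicoidal subgroup traces out exactly the constant-angle curves $L_\theta$, with the angle $\theta$ matching the angular-to-translational speed ratio, and that the singular orbits reduce to the two fixed points $p_1,p_2$. This is largely a matter of unwinding the explicit conformal action on $\mathbb{S}^{n-1}$ (conveniently in the half-space model, where the displayed formula for $n=3$ generalizes), but it requires care to see that the angle is genuinely constant along an orbit and independent of which arc of circle one intersects.
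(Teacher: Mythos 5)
Your proposal is correct and follows exactly the route the paper intends: the paper presents Corollary \ref{cor2} as an immediate consequence of Theorem \ref{main}, justified only by the remark that loxodromic curves are the orbits of helicoidal one-parameter subgroups of isometries of $\mathbb{H}^n$ (illustrated by the displayed formula in the half-space model), together with the fact that $\mathbb{H}^n$ satisfies the SC condition with $\alpha=1$. You simply carry out the orbit identification and hypothesis verification in more detail than the paper does.
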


We notice that, taking $\theta=0$ in the previous corollary, we recover
Theorem 1.8 of \cite{GS}. Theorem 1.8 also follows from Corollary \ref{cor1}
since radial graphs (considered in \cite{GS}) are transvections along a
geodesic of $\mathbb{H}^{n}.$

A fundamental result for proving the above theorems, which has interest on its
own, are the interior gradient estimates of the solutions of the CMC $H$ graph
PDE for Killing submersions (see Theorem \ref{intgrad} below). It extends
Theorem 1 of \cite{DLR}.

\section{Proofs of the results}

\qquad In what follows we use most of the nomenclature and the results proved by M. Dajczer and J. H. de Lira in
\cite{DL}. However, we introduce the notion of a Killing graph on a manner slightly
different from the one considered in \cite{DL}.

For the next result we allow $M$ be any Riemannian manifold and $Y$ a Killing field in $M$ without
singularities$.$ Denote by $\mathcal{O}\left(  x\right)  $ the integral curve
(which we also call orbit) of $Y$ through a point $x\in M.$ By a complete $Y-$
Killing section (we shall refer only to a Killing section because
$Y$ will be fixed throughout the text) we mean a complete up to the boundary
(possibly empty) hypersurface $P$ of $M$ such that any orbit $\mathcal{O}%
\left(  p\right)  $ of $Y$ through a point $p$ of $P$ intersects $P$ only at
$p$ and the intersection is transversal. We call $\Omega:=P\backslash\partial P$ a Killing domain. If  $P=\overline{\Omega}$ is a
hypersurface of class $C^{2,\alpha}$ in $M$ then we say that $\Omega$ is a $C^{2,\alpha}$ Killing domain.

If $u$ is a
function defined on a subset $T$ of $P$, the Killing graph of $u$ is given by
\[
\operatorname*{Gr}\left(  u\right)  =\left\{  \varphi\left(  u\left(  p\right)
,p\right)  \text{
$\vert$
}p\in T\right\}
\]
where $\varphi(s,x)=\varphi_{s}(x)$ is the flow of $Y.$ In the sequel, $s$ will stand for the flow parameter. We also set 
$$\Gamma_{T}=\left\{
\varphi(s,x)\text{
$\vert$
}x\in T\text{ and }s\in\mathbb{R}\right\}  .$$

Next, we denote by $\Pi:M\rightarrow P$ the projection defined by $\Pi(x)=\mathcal{O}%
\left(  x\right)  \cap P$. In all the sequel, we endow $P$ with the Riemannian metric $\left\langle \text{
, }\right\rangle _{\Pi}$ such that $\Pi$ becomes a Riemannian
submersion. 

Assume that $\Omega$ is a $C^{2,\alpha}$ Killing domain. 
Given $H\in\mathbb{R}$, it is not difficult to show that $\operatorname*{Gr}\left(  u\right)  $ has
CMC $H$ with respect to the unit normal vector field $\eta$ to
$\operatorname*{Gr}\left(  u\right)  $ such that $\left\langle Y,\eta
\right\rangle \leq0$ if and only if $u$ satisfies a certain second order
quasi-linear elliptic PDE $Q_{H}\left[  u\right]  =0$ on $M$ in terms of the
metric $\left\langle \text{ , }\right\rangle _{\Pi}$ in $P$ (for details,
including an explicit expression of $Q_H,$ see Section 2.1 of \cite{DL} or the short
revision done below).

We may then refer to the CMC\ $H$ Dirichlet problem in a Killing domain
$\Omega\subset M$ and for a given boundary data $\phi\in C^{0}\left(
\partial\Omega\right)  $  as the PDE boundary problem%
\begin{equation}
\left\{
\begin{array}
[c]{ll}%
Q_H\left[  u\right]  =0\text{ in }\Omega & u\in C^{2,\alpha}\left(
\Omega\right)  \cap C^{0}\left(  \overline{\Omega}\right) \\
u|_{\partial\Omega}=\phi. &
\end{array}
\right.  \label{eqini}%
\end{equation}

We begin by obtaining interior gradient estimates for the solutions of \eqref{eqini}. Our result generalizes Theorem 1 of \cite{DLR} to the case of CMC $H$ graph
PDE of Killing submersions. 

Fix a point $o\in \Omega$ and let $r>0$ be such that $r<i(o)$, the injectivity radius of $M$ at $o$. We obtain the following result :
\begin{theorem}
\label{intgrad}Let $\Omega$ be a Killing domain in $M.$ Let $o\in\Omega$ and
$r>0$ such that the open geodesic ball $B_{r}\left(  o\right)  $ is contained
in $\Omega.$ Let $u\in C^{3}\left(  B_{r}\left(  o\right)  \right)  $ be a
negative solution of $Q[u]=0$ in $B_{r}\left(  o\right)  .$ Then there is a
constant $L$ depending only on $u(o),$ $r,$ $\left\vert Y\right\vert $ and $H$
such that $\left\vert \nabla u\left(  o\right)  \right\vert \leq L.$
\end{theorem}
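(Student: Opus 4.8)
The plan is to follow the Korevaar--Simon method for interior gradient estimates, adapting the argument of \cite{DLR} from the product-like Killing graph setting to the present Killing submersion. Working on the graph $S=\operatorname{Gr}(u)$, I would first record that controlling $|\nabla u(o)|$ (the gradient in the submersion metric of $P$) is equivalent to bounding the gradient factor $W\geq 1$ (as in \cite{DL}, \cite{DLR}), which blows up precisely as $|\nabla u|\to\infty$, and to bounding away from zero the angle function $\Theta:=\langle Y,\eta\rangle\leq 0$, these being reciprocal up to the factor $|Y|$. The crucial structural feature I would exploit throughout is that $Y$ is Killing: $\nabla Y$ is skew-symmetric and the second covariant derivative of $Y$ is given by the ambient curvature, $\nabla^{2}Y=R(\cdot\,,Y)\,\cdot$ (Kostant's identity). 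This expresses the curvature appearing below in terms of $Y$ and its first covariant derivatives on the compact ball $\overline{B_{r}(o)}$, which is why the final constant can be made to depend only on $u(o)$, $r$, $|Y|$ and $H$, with no curvature hypothesis on $M$.

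Next I would derive the two elliptic identities on $S$ needed for the maximum principle. Using the structure equations of $S\subset M$ together with the fact that $Y$ is Killing, the flow (height) function and the angle function $\Theta$ each satisfy a second order equation for the Laplace--Beltrami operator $\Delta_{S}$. The decisive one is the Jacobi/Simons-type equation
\[
\Delta_{S}\Theta=-\bigl(|A|^{2}+\operatorname{Ric}_{M}(\eta,\eta)\bigr)\Theta+\langle\nabla H,\cdot\rangle\text{-terms},
\]
where $A$ is the second fundamental form of $S$. Since $H$ is constant the gradient-of-$H$ terms vanish, and the only genuinely dangerous contribution is $\operatorname{Ric}_{M}(\eta,\eta)\,\Theta$, which I would re-express through Kostant's identity so that it is bounded by a constant depending on $|Y|$ and its derivatives over $\overline{B_{r}(o)}$. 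I would likewise record the equation for the height function, which feeds the mean curvature $H$ into the computation.

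Finally I would run the Korevaar auxiliary-function argument. On $S$ I would consider a test function of the form $\Psi=\chi\, e^{\kappa(u-u(o))}\,W$, where $\chi$ is a cutoff supported in the part of $S$ lying over $B_{r}(o)$ and $\kappa>0$ is a large constant to be chosen, and examine $\Psi$ at an interior maximum point $q$. There $\nabla_{S}\Psi=0$ and $\Delta_{S}\Psi\leq 0$; substituting the identities from the previous step, the first relation eliminates $\nabla_{S}W$, while in the second the favorable negative term $-|A|^{2}\Theta$ together with the exponential weight (with $\kappa$ chosen against the bounds on $\operatorname{Ric}_{M}$ and on $|Y|$) dominates the remaining terms, forcing $W(q)$ to be bounded by a constant of the stated dependence. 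Evaluating the bound on $\Psi$ back at $\tilde{o}=\varphi(u(o),o)$ then yields $W(\tilde{o})\leq L$, hence $|\nabla u(o)|\leq L$. I expect the main obstacle to be the second step in the genuinely submersive case: unlike the product setting of \cite{DLR}, the O'Neill/torsion terms of the submersion $\Pi$ enter the equations for the height and angle functions, and one must verify that these, together with $\operatorname{Ric}_{M}(\eta,\eta)$, are all controlled through Kostant's identity by $|Y|$ and its covariant derivatives on $\overline{B_{r}(o)}$, so that the constant retains exactly the claimed dependence.
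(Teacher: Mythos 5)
Your proposal is a legitimate alternative route, but it is not the one the paper takes, and one of its supporting claims is wrong. The paper never passes to the intrinsic Laplacian on the graph or to the Jacobi equation for the angle function $\langle Y,\eta\rangle$; it works entirely on the base $P$ of the submersion with the metric $\left\langle \,,\,\right\rangle_{\Pi}$, differentiates the quasilinear equation $Q_H[u]=0$ directly (computing $W_{i;j}$, proving a commutation formula for $\hat{u}_{k;ij}$ so that the third derivatives of $u$ can be eliminated using the equation, and collecting the O'Neill terms $\gamma_{ij}$, the tensor $C_{ijk}$ and the curvature of the base into a constant $M$ independent of $u$), and then applies the maximum principle to $h=\eta W$ with the explicit Korevaar cutoff $\eta=e^{C_{1}(1-d^{2}/r^{2}+u/C)^{+}}-1$, $C=-1/(2u(o))$. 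Your plan --- the intrinsic Korevaar argument on $S$ via $\Delta_{S}\Theta=-\left(|A|^{2}+\operatorname{Ric}_{M}(\eta,\eta)\right)\Theta$ for constant $H$, the height-function identity, and the test function $\chi\, e^{\kappa(u-u(o))}W$ --- is the other standard implementation of the same idea. What it buys is cleaner second-order identities (no commutation formula, and the favorable term $-|A|^{2}\Theta\geq 0$ appears for free); what it costs is translating the conclusion back to $|\nabla u(o)|$ on the base and tracking the submersion torsion terms inside the equations for the height and angle functions, which you correctly identify as the main technical burden.

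The specific flaw is in your justification of the dependence of the constant. Kostant's identity $\nabla^{2}_{X,Z}Y=R(Y,X)Z$ only controls curvature components with $Y$ in one slot, so it cannot bound $\operatorname{Ric}_{M}(\eta,\eta)$: the normal $\eta$ has no a priori relation to $Y$, and this is exactly the term you single out as the dangerous one. This does not invalidate the estimate --- on the fixed compact ball $\overline{B_{r}(o)}$ the curvature is bounded, and the paper's own constants likewise absorb curvature and $\gamma_{ij}$ bounds over the ball --- but the resulting $L$ then depends on the geometry of $M$ over $\overline{B_{r}(o)}$, not literally on $|Y|$ alone as your argument asserts.
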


Before proving the above theorem, we review the nomenclature and some facts
of \cite{DL}. 

We fix a local reference frame $v_1,\ldots ,v_n$ on $\overline{\Omega}$ and we set $\sigma_{ij}=\left\langle v_i, v_j\right\rangle_{\Pi}$. We will now define a local frame in $M$. We denote by $D_1,\ldots ,D_n$ the basic vector fields $\Pi$-related to $v_1,\ldots ,v_n$.
The frame $D_0,\ldots ,D_n$, we considered on $M$, is defined by $D_0=f^{\frac{1}{2}}\partial_s$, where $f=\dfrac{1}{|Y|^2}$, ($\partial_s (q)=\varphi_\ast (s,p)\partial_s(p)$), and $D_i(q)=\varphi_\ast (s,p) D_i(p)$, where $q=\varphi (s,p)$, $p\in P$. 
 We point out that the unit normal vector field to $Gr(u)$ pointing upward is given by
\begin{equation}
\label{normal}
N=\dfrac{1}{W}(f^{\frac{1}{2}}D_0-\hat{u}^j D_j),
\end{equation}
where $\hat{u}^j=\sigma^{ij} D_i (u-s)$ and $W^2=f+\hat{u}^i \hat{u}_i=f+ \sigma_{ij} \hat{u}^i \hat{u}^j$. We notice that $ \hat{u}_i$ and $W$ are not depending on $s$ and therefore can be seen as function defined on $P$. Finally, using the previous notation, the operator $Q$ (defined in \eqref{eqini}) can be written as
$$Q_H\left[u\right]=\dfrac{1}{W}(A^{ij}\hat{u}_{j;i}-\dfrac{(f+W^2)}{W^2}\left\langle \Pi_\ast \bar{\nabla}_{D_0}D_0,Du \right\rangle )-nH,$$
where $\hat{u}_{i;j}=\left\langle \bar{\nabla}_{D_i} \bar{\nabla}(u-s),D_j  \right\rangle$, $Du=\Pi_\ast \overline{\nabla }(u-s)$ and $A^{ik}= \sigma^{ik}-\dfrac{\hat{u}^i \hat{u}^k}{W^2}$.

\bigskip

\begin{proof}
[Proof of Theorem \ref{intgrad}] The proof will follow closely the one of Theorem $1$ in \cite{DLR}. Let $p\in B_{r}(o)$ be an interior point where
$h=\eta W$ attains its maximum, where $\eta$ is a smooth function with support
in $B_{r}(o)$ which will be determined in the sequel. In all this section, the
computations will be done at the point $p$. Let $v_{1},\ldots,v_{n}$ be an
orthonormal tangent frame at $p\in B_{r}(o)$. Then we have $h_{i}=0$ (where
the derivative is taken with respect to $v_{i}$). This implies that
\begin{equation}
\eta_{i}W=-\eta W_{i}. \label{critpoint}%
\end{equation}
We also have, since $\dfrac{A^{ij}}{W}$ is definite positive, that
\[
0\geq\dfrac{1}{W}A^{ij}h_{ij}=\dfrac{1}{W}A^{ij}(W\eta_{i;j}+2\eta_{i}%
W_{j}+\eta W_{i;j}).
\]
Using \eqref{critpoint}, the previous inequality can be rewritten as
\begin{equation}
A^{ij}\eta_{i;j}+\dfrac{\eta}{W^{2}}A^{ij}(WW_{i;j}-2W_{i}W_{j})\leq0.
\label{critpoint1}%
\end{equation}
From \eqref{normal}, we have
\begin{equation}
N^{k}=-\dfrac{\hat{u}^{k}}{W}. \label{normale1}%
\end{equation}
Derivating $W$, we find
\[
W_{i}=\dfrac{f_{i}}{2W}+\dfrac{\hat{u}^{k}\hat{u}_{k;i}}{W}=\dfrac{f_{i}}%
{2W}-N^{k}\hat{u}_{k;i}.
\]
From \eqref{normale1}, we get
\[
N_{;j}^{k}=-\dfrac{\hat{u}_{;j}^{k}}{W}+\dfrac{\hat{u}^{k}W_{j}}{W^{2}}.
\]
Using the previous inequalities, we have
\begin{align*}
W_{i;j}  &  =\dfrac{f_{i;j}}{2W}-\dfrac{f_{i}W_{j}}{2W^{2}}-N_{;j}^{k}\hat
{u}_{k;i}-N^{k}\hat{u}_{k;ij}\\
&  =\dfrac{f_{i;j}}{2W}+\dfrac{\hat{u}_{;j}^{k}\hat{u}_{k;i}}{W}-\dfrac
{\hat{u}^{k}\hat{u}_{k;i}W_{j}}{W^{2}}-\dfrac{f_{i}W_{j}}{2W^{2}}-N^{k}\hat
{u}_{k;ij}\\
&  =\dfrac{f_{i;j}}{2W}+\dfrac{\hat{u}_{;j}^{k}\hat{u}_{k;i}}{W}-\dfrac
{W_{i}W_{j}}{W}-N^{k}\hat{u}_{k;ij}\\
&  =\dfrac{f_{i;j}}{2W}+\dfrac{A^{kl}}{W}\hat{u}_{l;j}\hat{u}_{k;i}%
+\dfrac{f_{i}f_{j}}{4W^{3}}-\dfrac{1}{2W^{2}}(W_{i}f_{j}+W_{j}f_{i})-N^{k}%
\hat{u}_{k;ij}.
\end{align*}
Multiplying by $A^{ij}$ the above equation and using \eqref{critpoint}, we
find
\begin{equation}
A^{ij}W_{i;j}=\dfrac{A^{ij}f_{i;j}}{2W}+\dfrac{A^{ij}A^{kl}}{W}\hat{u}%
_{l;j}\hat{u}_{k;i}+\dfrac{A^{ij}f_{i}f_{j}}{4W^{3}}+\dfrac{1}{\eta W}%
A^{ij}\eta_{i}f_{j}-A^{ij}N^{k}\hat{u}_{k;ij}. \label{eqint1}%
\end{equation}
In order to get rid of the term involving three derivatives of $u$ in
\eqref{eqint1}, we want to find a commutation formula for $\hat{u}_{k;ij}$. We
recall (see equation ($11$) of \cite{DL}) that
\[
\hat{u}_{k;i}=u_{k;i}-s_{k;i}+\dfrac{1}{2}\gamma_{ki},
\]
where $\gamma_{ki}=f^{\frac{1}{2}}\left\langle [D_{k},D_{i}],D_{0}%
\right\rangle $. We deduce from the previous equality that
\begin{align*}
\hat{u}_{k;ij}  &  =u_{k;ij}-s_{k;ij}+\dfrac{1}{2}(\gamma_{ki})_{j}%
=u_{i;jk}+R_{kji}^{l}u_{l}-s_{k;ij}+\dfrac{1}{2}(\gamma_{ki})_{j}\\
&  =(\hat{u}_{j;i}+s_{j;i}-\dfrac{1}{2}\gamma_{ji})_{k}+R_{kji}^{l}%
u_{l}-s_{k;ij}+\dfrac{1}{2}(\gamma_{ki})_{j}\\
&  =\hat{u}_{j;ik}+s_{j;ik}-\dfrac{1}{2}(\gamma_{ji})_{k}+R_{kji}^{l}%
u_{l}-s_{k;ij}+\dfrac{1}{2}(\gamma_{ki})_{j}\\
&  =\hat{u}_{j;ik}+R_{kji}^{l}\hat{u}_{l}+R_{kji}^{l}s_{l}+s_{j;ik}%
-s_{k;ij}+\dfrac{1}{2}((\gamma_{ki})_{j}-(\gamma_{ji})_{k})\\
&  =\hat{u}_{j;ik}+R_{kji}^{l}\hat{u}_{l}+C_{ijk},
\end{align*}
where $C_{ijk}=R_{kji}^{l}s_{l}+s_{j;ik}-s_{k;ij}+\dfrac{1}{2}((\gamma
_{ki})_{j}-(\gamma_{ji})_{k})$ is not depending on $u$.
Using \eqref{eqini} and the commutation formula, the last term of
\eqref{eqint1} rewrites as
\begin{align*}
A^{ij}N^{k}\hat{u}_{k;ij}  &  =A^{ij}N^{k}\hat{u}_{j;ik}-\dfrac{A^{ij}%
R_{kji}^{l}\hat{u}_{l}\hat{u}^{k}}{W}-\dfrac{\hat{u}^{k}A^{ij}C_{ijk}}{W}\\
&  =N^{k}(A^{ij}\hat{u}_{j;i})_{k}-N^{k}A_{;k}^{ij}\hat{u}_{i;j}-\dfrac
{A^{ij}R_{kji}^{l}\hat{u}_{l}\hat{u}^{k}}{W}-\dfrac{\hat{u}^{k}A^{ij}C_{ijk}%
}{W}\\
&  =nN^{k}(WH)_{k}+N^{k}\left(  \dfrac{(f+W^{2})}{W^{2}}\left\langle \Pi
_{\ast}\bar{\nabla}_{D_{0}}D_{0},Du\right\rangle \right)  _{k}\\
&  -N^{k}A_{;k}^{ij}\hat{u}_{j;i}-\dfrac{A^{ij}R_{kji}^{l}\hat{u}_{l}\hat
{u}^{k}}{W}-\dfrac{\hat{u}^{k}A^{ij}C_{ijk}}{W}.
\end{align*}
Straightforward computations using \eqref{critpoint} give
\[
(WH)_{k}=W_{k}H+WH_{k}=\dfrac{W}{\eta}(-\eta_{k}H+\eta H_{k}),
\]
and
\begin{equation}
(\dfrac{f+W^{2}}{W^{2}})_{k}=\dfrac{f_{k}}{W^{2}}-\dfrac{f}{W^{4}}(f_{k}%
+2\hat{u}^{l}\hat{u}_{l;k})=\dfrac{1}{W^{2}}(f_{k}+2\dfrac{f\eta_{k}}{\eta}).
\label{612e1}%
\end{equation}
We also have, using \eqref{612e1},
%

\begin{align*}
\left(  \dfrac{(f+W^{2})}{W^{2}}\left\langle \Pi_{\ast}\bar{\nabla}_{D_{0}%
}D_{0},Du \right\rangle \right)  _{k} 
 &  = \dfrac{(f+W^{2})}{2fW^{2}}[(\dfrac{f_{l} f_{k}}{f}-f_{k;l})WN^{l}+ f^{l} \hat{u}_{l;k}] \\
 & + \left\langle \Pi_{\ast}\bar{\nabla}_{D_{0}}D_{0},Du \right\rangle
\dfrac{1}{W^{2}} (f_{k} +2 \dfrac{f\eta_{k}}{ \eta}),
\end{align*}
and
\begin{align*}
A^{ij}_{;k}  &  =-\dfrac{1}{W^{2}}(\hat{u}^{i}_{;k} \hat{u}^{j}+\hat{u}^{i}
\hat{u}^{j}_{;k})+\dfrac{1}{W^{4}} (f_{k}- 2WN^{l} \hat{u}_{l;k})\hat{u}^{i}
\hat{u}^{j}\\
&  = \dfrac{1}{W} (\hat{u}^{i}_{;k}-N^{i} N^{l} \hat{u}_{l;k} )N^{j}
+\dfrac{1}{W} (\hat{u}^{j}_{;k}-N^{i} N^{l} \hat{u}_{l;k} )N^{i} +\dfrac
{1}{W^{2}} f_{k} N^{i} N^{j}\\
&  = \dfrac{1}{W} A^{il} \hat{u}_{l;k} N^{j}+ \dfrac{1}{W} A^{jl} \hat
{u}_{l;k} N^{i}+\dfrac{1}{W^{2}} f_{k} N^{i} N^{j}.
\end{align*}
Multiplying the previous equality by $N^{k} \hat{u}_{j;i}$, we find
\[
N^{k} A^{ij}_{;k}\hat{u}_{j;i}= \dfrac{1}{W}N^{k} \hat{u}_{j;i}\hat{u}_{l;k}
(A^{il}N^{j}+A^{jl}N^{i} )+\dfrac{1}{W^{2}} f_{k} N^{i} N^{j} N^{k} \hat
{u}_{j;i} .
\]
Recalling that
\[
N^{k} \hat{u}_{k;i}=\dfrac{f_{i}}{2W}+\dfrac{W \eta_{i}}{\eta},
\]
and
\[
\hat{u}_{i;j}=\hat{u}_{j;i}+\gamma_{ij},
\]
we have
\begin{align*}
N^{k} A^{ij}_{;k}\hat{u}_{j;i}  &  =\dfrac{1}{W^{2}} f_{k} N^{k} N^{i}
(\dfrac{f_{i}}{2W}+\dfrac{W \eta_{i}}{\eta})+ \dfrac{1}{W} A^{il}(\dfrac
{f_{i}}{2W}+\dfrac{W \eta_{i}}{\eta})N^{k} (\hat{u}_{k;l}+\gamma_{lk} )\\
&  \hspace{3cm}+\dfrac{1}{W} A^{jl} N^{k} N^{i} (\hat{u}_{i;j}+\gamma
_{ji})(\hat{u}_{k;l}+\gamma_{lk})\\
&  = \dfrac{1}{W^{2}} f_{k} N^{k} N^{i} (\dfrac{f_{i}}{2W}+\dfrac{W \eta_{i}%
}{\eta})+ \dfrac{2}{W} A^{il}(\dfrac{f_{i}}{2W}+\dfrac{W \eta_{i}}{\eta
})(\dfrac{f_{l}}{2W}+\dfrac{W \eta_{l}}{\eta})\\
&  \hspace{2cm} +\dfrac{1}{W} A^{jl} N^{k} N^{i} \gamma_{ji} \gamma_{lk}+
\dfrac{3}{W} A^{il}(\dfrac{f_{i}}{2W}+\dfrac{W \eta_{i}}{\eta})N^{k}
\gamma_{lk},
\end{align*}
and
\[
N^{k} f_{l} \hat{u}_{l;k}= N^{k} f_{l} (\hat{u}_{k;l}-\gamma_{kl})= \dfrac
{f}{2W}\sigma^{kl}\dfrac{f_{k} f_{l}}{f}+\dfrac{W}{\eta}f_{l} \eta^{l}%
-\gamma_{kl}N^{k} f_{l} .
\]
Using the previous computations, we deduce that the last term of
\eqref{eqint1} can be rewritten as
\begin{align*}
A^{ij}  &  N^{k} \hat{u}_{k;ij}= n N^{k} \dfrac{W}{\eta} (-\eta_{k} H+\eta
H_{k})- \dfrac{2}{W} A^{il}(\dfrac{f_{i}}{2W}+\dfrac{W \eta_{i}}{\eta}%
)(\dfrac{f_{l}}{2W}+\dfrac{W \eta_{l}}{\eta})\\
&  \hspace{3cm}- \dfrac{1}{W^{2}} f_{k} N^{k} N^{i} (\dfrac{f_{i}}{2W}%
+\dfrac{W \eta_{i}}{\eta})- \dfrac{1}{W} A^{jl} N^{k} N^{i} \gamma_{ji}
\gamma_{lk}\\
&  \hspace{2cm}- \dfrac{3}{W} A^{il}(\dfrac{f_{i}}{2W}+\dfrac{W \eta_{i}}%
{\eta})N^{k} \gamma_{lk}-\dfrac{A^{ij}R_{kji}^{l} \hat{u}_{l} \hat{u}^{k}}%
{W}\\
&  \hspace{3cm}-\dfrac{\hat{u}^{k} A^{ij}C_{ijk}}{W}+ \left\langle \Pi_{\ast
}\bar{\nabla}_{D_{0}}D_{0},Du \right\rangle \dfrac{N_{k}}{W^{2}} (f_{k} +2
\dfrac{f\eta_{k}}{ \eta})\\
&  + \dfrac{(f+W^{2})}{2fW^{2}} \left[  \left(  \dfrac{f}{2W}%
\sigma^{kl}+WN^{k} N^{l}\right)  \dfrac{f_{k} f_{l}}{f}-W N^{k} N^{l}
f_{k;l}+\dfrac{W}{\eta}f_{l} \eta^{l}-\gamma_{kl}N^{k} f_{l}\right]  .
\end{align*}
Thus, from \eqref{eqint1}, we obtain
\begin{align*}
A^{ij}  &  W_{ij}-\dfrac{2}{W}A^{ij}W_{i} W_{j}\\
&  = \dfrac{3}{4W^{3}}A^{ij}f_{i} f_{j} +\dfrac{1}{W}A^{ij}A^{kl}\hat{u}%
_{l;j}\hat{u}_{k;i}+\dfrac{3}{W\eta}A^{ij}f_{i} \eta_{j}+\dfrac{1}{2W}%
A^{ij}f_{i;j}\\
&  - n N^{k} \dfrac{W}{\eta} (-\eta_{k} H+\eta H_{k}) + \dfrac{1}{W^{2}} f_{k}
N^{k} N^{i} (\dfrac{f_{i}}{2W}+\dfrac{W \eta_{i}}{\eta})+ \dfrac{1}{W} A^{jl}
N^{k} N^{i} \gamma_{ji} \gamma_{lk}\\
&  \hspace{3cm}+ \dfrac{3}{W} A^{il}(\dfrac{f_{i}}{2W}+\dfrac{W \eta_{i}}%
{\eta})N^{k} \gamma_{lk}+\dfrac{A^{ij}R_{kji}^{l} \hat{u}_{l} \hat{u}^{k}}%
{W}\\
&  \hspace{2.5cm}+\dfrac{\hat{u}^{k} A^{ij}C_{ijk}}{W}- \left\langle \Pi
_{\ast}\bar{\nabla}_{D_{0}}D_{0},Du \right\rangle \dfrac{N_{k}}{W^{2}} (f_{k}
+2 \dfrac{f\eta_{k}}{ \eta})\\
&  - \dfrac{(f+W^{2})}{W^{2}}\dfrac{1}{2f} \left[  \left(  \dfrac{f}{2W}%
\sigma^{kl}+WN^{k} N^{l}\right)  \dfrac{f_{k} f_{l}}{f}-W N^{k} N^{l}
f_{k;l}+\dfrac{W}{\eta}f_{l} \eta^{l}-\gamma_{kl}N^{k} f_{l}\right]  .
\end{align*}
Multiplying by $\dfrac{\eta}{W}$, we have
\begin{align*}
&  \dfrac{\eta}{W} (A^{ij}W_{ij}-\dfrac{2}{W}A^{ij}W_{i} W_{j} )\\
&  \hspace{1cm} \geq\left[  -nN^{k} H_{k} -\dfrac{f_{k}}{ W^{3}}N^{k}
\left\langle \Pi_{\ast}\bar{\nabla}_{D_{0}}D_{0},Du \right\rangle +\dfrac
{1}{2W^{2}}A^{ij}f_{i;j}\right. \\
&  +\left.  \dfrac{1}{W^{2}} A^{jl} N^{k} N^{i} \gamma_{ji} \gamma_{lk}%
+\dfrac{3}{2W^{3}} A^{il}f_{i} N^{k} \gamma_{lk} +\dfrac{\hat{u}^{k}
A^{ij}C_{ijk}}{W^{2}}+\dfrac{A^{ij}R_{kji}^{l} \hat{u}_{l} \hat{u}^{k}}{W^{2}%
}\right. \\
&  - \left.  \dfrac{(f+W^{2})}{W^{2}}\dfrac{1}{2f} \left[  \left(  \dfrac
{f}{2W^{2}}\sigma^{kl}+N^{k} N^{l}\right)  \dfrac{f_{k} f_{l}}{f}- N^{k} N^{l}
f_{k;l}-\dfrac{1}{W}\gamma_{kl}N^{k} f_{l} \right]  \right]  \eta\\
&  + \left[  \left(  nH+\dfrac{1}{W^{2}}N^{k} f_{k} -\dfrac{2f}{W^{3}%
}\left\langle \Pi_{\ast}\bar{\nabla}_{D_{0}}D_{0},Du \right\rangle \right)
N^{i} \right. \\
&  \hspace{2cm}+\left.  \dfrac{3}{W}A^{jl} N^{k} \gamma_{lk}+\left(  \dfrac
{3}{W^{2}}A^{ij}- \dfrac{(f+W^{2})}{W^{2}}\dfrac{1}{2f} \sigma^{ij} \right)
f_{j} \right]  \eta_{i} .
\end{align*}
Thus it is easy to see that there exists a constant $M>0$, not depending on
$u$, such that
\[
\dfrac{\eta}{W^{2}} (WA^{ij}W_{ij}-2 A^{ij}W_{i} W_{j} )\geq-M \eta-A^{i}
\eta_{i},
\]
where $A^{i}$ is the coefficient of $\eta_{i}$. From \eqref{critpoint1}, we
deduce that
\begin{equation}
\label{eqint2}A^{ij} \eta_{i;j} -M\eta-A^{i} \eta_{i} \leq0.
\end{equation}
We are now ready to choose an explicit $\eta$. We take
\[
\eta(x)=g(\phi(x))=e^{\displaystyle C_{1} \phi(x)}-1= e^{\displaystyle C_{1}
(1-\frac{d^{2}(x)}{r^{2}}+\frac{u(x)}{C})^{+}}-1,
\]
where $C=-\dfrac{1}{2 u(o)}.$ Straightforward computations give
\[
\eta_{i}=g^{\prime}(-r^{-2}(d^{2})_{i}+C(u_{i}-s_{i}) )=g^{\prime}%
(-r^{-2}(d^{2})_{i}+C\hat{u}_{i}),
\]
and
\[
\eta_{i;j}=g^{\prime}(-r^{-2}(d^{2})_{i;j}+C\hat{u}_{i;j} )+g^{\prime\prime}
(-r^{-2}(d^{2})_{i}+C\hat{u}_{i}) (-r^{-2}(d^{2})_{j}+C\hat{u}_{j}) .
\]
We deduce from the two previous lines that
\[
A^{ij}(-r^{-2}(d^{2})_{i}+C\hat{u}_{i}) (-r^{-2}(d^{2})_{j}+C\hat{u}_{j}%
)\geq\dfrac{C^{2} f}{W^{2}}\left(  |Du|^{2}-\dfrac{2}{Cr^{2}}\left\langle
Du,\nabla d^{2} \right\rangle \right)  ,
\]
and
\begin{align*}
A^{ij} (-r^{-2}(d^{2})_{i;j}  &  +C\hat{u}_{i;j} )= -r^{-2} A^{ij}%
(d^{2})_{i;j}\\
&  + C\left(  nWH+\dfrac{f+W^{2}}{W^{2}}\left\langle \Pi_{\ast}\bar{\nabla
}_{D_{0}}D_{0},Du \right\rangle +A^{ij}\gamma_{ij} \right)  ,
\end{align*}
where
\[
A^{ij}(d^{2})_{i;j}=\Delta(d^{2}) -\dfrac{1}{W^{2}}\left\langle \nabla_{Du}
\nabla d^{2}, Du \right\rangle .
\]
Inserting the previous expressions into \eqref{eqint2}, we have
\begin{align*}
&  \dfrac{C^{2} f}{W^{2}}\left(  |Du|^{2}-\dfrac{2}{Cr^{2}}\left\langle
Du,\nabla d^{2} \right\rangle \right)  g^{\prime\prime}\\
& \ \ \ \ \ + \left[  C\left(  nWH+\dfrac{f+W^{2}}{W^{2}}\left\langle \Pi_{\ast}%
\bar{\nabla}_{D_{0}}D_{0},Du \right\rangle +A^{ij}\gamma_{ij} \right) \right. \\ 
& \ \ \ \ \ \ \ \hspace{3cm} -\left.
r^{-2} \left(  \Delta(d^{2}) -\dfrac{1}{W^{2}}\left\langle \nabla_{Du} \nabla
d^{2}, Du \right\rangle \right)  \right]  g^{\prime}\\
&  \ \ \ \ \ \ \ \ \ \ \ \ \hspace{5cm} \leq Mg +A^{i} ( -r^{2} (d^{2}%
)_{i}+C\hat{u}_{i})g^{\prime}.
\end{align*}
Using the explicit expression of $A^{i}$, it is easy to see that $C A^{i}%
\hat{u}_{i}$ contains bounded terms and the term
\[
C\left(  nWH+\dfrac{f+W^{2}}{W^{2}}\left\langle \Pi_{\ast}\bar{\nabla}_{D_{0}%
}D_{0},Du \right\rangle \right)  .
\]
Therefore, we conclude that
\[
\dfrac{C^{2} f}{W^{2}}(|Du|^{2}-\dfrac{2}{Cr^{2}}\left\langle Du,\nabla d^{2}
\right\rangle ) g^{\prime\prime} +Pg^{\prime}-Mg\leq0,
\]
where $P$ and $M$ do not depend on $u$. Finally, it is easy to check that the
coefficient of $g^{\prime\prime}$ is strictly positive if we assume that
$|Du|\geq\dfrac{16 u_{0}}{r}$. It implies that
\[
W(p)\leq C_{2}=\sup_{B_{r} (o)} f+\dfrac{16 u_{0}}{r}.
\]
Since $p$ is the maximum point of $h$, this implies that
\[
(e^{\frac{C_{1}}{2}}-1)W(0)\leq C_{2} e^{C_{1}}.
\]

\end{proof}


For the proof of Theorem \ref{main} we make use of the following lemma, which shows that the SC condition implies an explicit mean convexity condition. Precisely:

\begin{lemma}
\label{rmksc}Assume $M$ is a
Hadamard manifold satisfying the strict convexity condition and such that $K_{M}\leq-\alpha$, for some constant $\alpha>0$. Then $M$ satisfies the $h$-mean convexity
condition for $h<\sqrt{\alpha}$, that is,  given $x\in\partial_{\infty}M,$
 a relatively open subset $W\subset\partial_{\infty}M$ containing $x$ and $h<\sqrt{\alpha}$,
there exists a $C^{2}$ open set $\Lambda\subset\overline{M}$ such that
$x\in\operatorname*{Int}(\partial_{\infty}\Lambda)\subset W$ and the  mean curvature of $M\backslash\Lambda$ with respect to the normal vector pointing to $M\backslash\Lambda$ is bigger than or equal to $h$.
\end{lemma}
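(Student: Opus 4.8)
The plan is to start from the set furnished by the SC condition and to ``thicken'' it, exploiting the bound $K_{M}\le-\alpha$ to upgrade mere convexity (mean curvature $\ge0$) to strict $h$-mean convexity. Concretely, apply the SC condition to the given $x$ and $W$ to obtain a $C^{2}$ open set $\Omega\subset\overline{M}$ with $x\in\operatorname{Int}(\partial_{\infty}\Omega)\subset W$ and $C:=M\setminus\Omega$ convex. For $y\in\Omega$ set $\rho(y)=d(y,C)$, and for $t>0$ let $\Sigma_{t}=\{\rho=t\}$ and $\Lambda=\{\rho>t\}$. Since $C$ is convex in a Hadamard manifold, the nearest point projection onto $C$ is well defined and $\rho$ has no critical points in $\Omega$, so $\rho$ is smooth away from $C$ and each $\Sigma_{t}$ is an embedded equidistant hypersurface; moreover $M\setminus\Lambda=\{\rho\le t\}\supset C$ is again convex, with $\partial(M\setminus\Lambda)=\Sigma_{t}$ and inner unit normal $-\nabla\rho$ along $\Sigma_{t}$. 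I will fix $t$ large only at the end.

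The heart of the argument is a Riccati/Hessian comparison along the normal geodesics $s\mapsto\gamma(s)$, $\gamma'=\nabla\rho$. Writing $S(t)=\operatorname{Hess}\rho|_{T\Sigma_{t}}$, the principal curvatures of $\Sigma_{t}$ with respect to the inner normal $-\nabla\rho$ are exactly the eigenvalues $\mu_{i}(t)$ of $S(t)$, and $S$ satisfies the matrix Riccati equation $S'+S^{2}+\mathcal{R}=0$, where $\mathcal{R}(X)=R(X,\gamma')\gamma'$ obeys $\langle\mathcal{R}(X),X\rangle\le-\alpha|X|^{2}$ by the curvature bound. Hence $\mu_{i}'\ge\alpha-\mu_{i}^{2}$, while convexity of $C$ gives $\mu_{i}(0)\ge0$. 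Comparing with the scalar solution of $y'=\alpha-y^{2}$, $y(0)=0$, namely $y(t)=\sqrt{\alpha}\tanh(\sqrt{\alpha}\,t)$, the standard Riccati comparison (legitimate since convexity of $C$ rules out focal points in $\Omega$) yields $\mu_{i}(t)\ge\sqrt{\alpha}\tanh(\sqrt{\alpha}\,t)$ for all $i$ and all $t>0$. Because $h<\sqrt{\alpha}$ and $\sqrt{\alpha}\tanh(\sqrt{\alpha}\,t)\to\sqrt{\alpha}$, I fix $t=t(\alpha,h)$ so large that $\sqrt{\alpha}\tanh(\sqrt{\alpha}\,t)\ge h$. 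Then every principal curvature of $\Sigma_{t}=\partial(M\setminus\Lambda)$ with respect to the normal pointing into $M\setminus\Lambda$ is $\ge h$, and in particular so is the mean curvature.

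It remains to verify the asymptotic boundary condition for this $\Lambda$. Since $\Lambda\subset\Omega$ one has $\partial_{\infty}\Lambda\subset\partial_{\infty}\Omega$, hence $\operatorname{Int}(\partial_{\infty}\Lambda)\subset\operatorname{Int}(\partial_{\infty}\Omega)\subset W$. Conversely, removing the collar $\{0<\rho\le t\}$ of finite width $t$ cannot affect interior ideal points: any $\xi\in\operatorname{Int}(\partial_{\infty}\Omega)$ is the endpoint of a ray that goes uniformly deep into $\Omega$, so that $\rho\to\infty$ along it and the ray eventually lies in $\Lambda$; thus $\operatorname{Int}(\partial_{\infty}\Omega)\subset\operatorname{Int}(\partial_{\infty}\Lambda)$ and in particular $x\in\operatorname{Int}(\partial_{\infty}\Lambda)$. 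Combining the two inclusions gives $x\in\operatorname{Int}(\partial_{\infty}\Lambda)\subset W$, so $\Lambda$ is the set sought.

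The step I expect to be most delicate is regularity, together with making the eigenvalue comparison rigorous. Flowing a merely $C^{2}$ convex boundary by the normal exponential map can a priori cost a derivative, so to guarantee that $\Sigma_{t}$ is genuinely $C^{2}$ I would either first replace $C$ by a smooth convex set approximating it from outside (convex sets in Hadamard manifolds can be smoothed while keeping convexity and the same interior ideal boundary), or record that, away from $C$, the absence of focal points makes $\rho$ as regular as the metric permits. The inequality $\mu_{i}(t)\ge\sqrt{\alpha}\tanh(\sqrt{\alpha}\,t)$ should then be justified through the standard comparison theorem for the shape operator of equidistants of a convex set, rather than by manipulating the eigenvalues of the non-commuting family $S(t)$ by hand; the fact that the comparison bound is the scalar multiple $y(t)\,\mathrm{Id}$ of the identity is precisely what makes the comparison clean.
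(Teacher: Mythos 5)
Your proposal is correct and follows essentially the same route as the paper: both take the convex set supplied by the SC condition, consider the equidistant hypersurfaces of its boundary, and use curvature comparison (the paper via the Hessian comparison for $\Delta d\geq(n-1)\sqrt{\alpha}\tanh(\sqrt{\alpha}d)$, you via the equivalent Riccati comparison for the shape operator) to conclude that an equidistant at sufficiently large distance is $h$-mean convex since $\tanh\to1$. Your additional verification that the interior ideal boundary is unchanged by passing to the equidistant, and your remarks on regularity, are points the paper leaves implicit.
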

\begin{proof}
Given $x\in\partial_{\infty}M$ and
 a relatively open subset $W\subset\partial_{\infty}M$ containing $x,$ let $\Omega$ be a convex unbounded domain in $M$, given by the SC condition such that $x\in\operatorname*{Int}(\partial_{\infty}\Omega)\subset W$.  Denote by
$d:\Omega\rightarrow \mathbb{R}$ the distance function to $\partial\Omega$. Then the
 hessian comparison theorem (see \cite{choi}) yields 
\[
\Delta d\geq(n-1)\sqrt{\alpha}\tanh(\sqrt{\alpha}d),
\]
i.e. the equidistant hypersurface $\Omega_{d}$ of $\Omega$ is $\sqrt{\alpha}\tanh
(\sqrt{\alpha}d)$-convex. Since $\tanh(\sqrt{\alpha}d)\underset{d\rightarrow \infty}{\longrightarrow}1$, we
deduce that $M$ also satisfies the $h$-mean convexity
condition for $h<\sqrt{\alpha}$.
\end{proof}

\begin{proof}
[Proof of Theorem \ref{main}]Let $\gamma:\left(  -\infty,\infty\right)
\rightarrow M$ be the geodesic translated by $Y.$ Set $P=\exp_{o}\left\{
Y(o)\right\}  ^{\bot}$ where $o=\gamma\left(  0\right)  .$ 
Let $p\in P$ and $t\in\mathbb{R}$ be given. We may write $p=\exp_{\gamma(s)}u$
for some $s\in\mathbb{R}$ and $u\in\gamma^{\prime}\left(  s\right)  ^{\bot}.$
Since $\tilde{\gamma}(r)=\exp_{\gamma(s)}\left(  ru\right)  ,$ $r\in\left[
0,1\right]  ,$ is a geodesic and $\varphi_{t}$ an isometry, $\beta
(r):=\varphi_{t}\left(  \tilde{\gamma}(r)\right)  $ is also a geodesic which,
moreover, satisfies the initial conditions
\begin{align*}
\beta(0) &  =\varphi_{t}(\tilde{\gamma}(0))=\varphi_{t}(\gamma(s))=\gamma\left(
s+t\right)  \\
\beta^{\prime}(0) &  =d\left(  \varphi_{t}\right)  _{\gamma(s)}u=:v,
\end{align*}
we have $\beta(r)=\exp_{\gamma(s+t)}(rv)$ by uniqueness. It follows that
\[
\varphi_{t}(p)=\beta(1)=\exp_{\gamma(s+t)}v.
\]
Moreover, since
\[
0=\left\langle u,\gamma^{\prime}(s)\right\rangle =\left\langle d\left(
\varphi_{t}\right)  _{\gamma(s)}u,d\left(  \varphi_{t}\right)  _{\gamma
(s)}\gamma^{\prime}(s)\right\rangle =\left\langle v,\gamma^{\prime
}(s+t)\right\rangle
\]
we have $v\in\gamma^{\prime}\left(  s+t\right)  ^{\bot}$ and, as the normal
exponential map of a geodesic in Hadamard manifold is a diffeomorphism from
the normal bundle of the geodesic onto $M,$ we have $\varphi_{t}\left(
p\right)  \cap P\neq\varnothing$ if and only if $t=0.$ 

We now observe that $Y$ is everywhere transversal to $P$. Indeed, assume by contradiction that $Y$ is not transversal to $P$ at some point $p\in P.$ Let
$d:N\rightarrow\mathbb{R}$ be the distance function to $P.$ We set  $f(t)=d\left(  \varphi\left(  t,p\right)  \right)  $ and observe that $f(0)=0$. Moreover, since
$\varphi(t,\cdot)$ is an isometry of $N,$ we have, for any fixed $t,$
$$
\operatorname{grad}d\left(  \varphi\left(  t,p\right)  \right)  =d\varphi\left(
t,p\right)  _{p}\left(  \operatorname{grad}d\left(  p\right)  \right).
$$
Therefore, we obtain
\begin{align*}
f^{\prime}(t)  & =\left\langle \operatorname{grad}d,\left.  \frac{\partial
\varphi\left(  s,p\right)  }{\partial s}\right\vert _{s=t}\right\rangle
=\left\langle \operatorname{grad}d\left(  \varphi\left(  t,p\right)  \right)
,Y\left(  \varphi\left(  t,p\right)  \right)  \right\rangle \\
& =\left\langle d\varphi\left(  t,p\right)  _{p}\left(  \operatorname{grad}%
d\left(  p\right)  \right)  ,d\varphi\left(  t,p\right)  _{p}\left(  Y\left(
p\right)  \right)  \right\rangle \\
& =\left\langle \operatorname{grad}d\left(
p\right)  ,Y\left(  p\right)  \right\rangle =0.
\end{align*}
This implies that $f\equiv0$ and, in return, that $\varphi\left(  t,p\right)  \in P$ for all $t,$ which yields to a contradiction.
This proves that $P$ is a Killing section.

 Since any orbit of $\varphi$ at $\partial_{\infty}N$ intersects $\Gamma$ at one
and exactly one point, $\Gamma$ is the Killing graph of a function $\phi\in
C^{0}\left(  \partial_{\infty}P\right)  .$ Let
$F\in C^{2,\alpha}(P)\cap C^{0}(\bar{P})$ $(\bar{P}=P\cup\partial_{\infty}P)$
be such that $F|_{\partial_{\infty}P}=\phi$.

Let $\rho$ be the geodesic distance in $P$ to a fixed point $o\in P$. We
denote by $B_{k}$, for $k=2,3,\ldots$, the geodesic ball in $P$ centered in $o$ and of radius
$k  $. We first show that, for any
$k=2,3,\ldots$, there is a solution $u_{k}\in C^{2,\alpha}(\bar{B}_{k})$ of
\begin{equation}
\left\{
\begin{array}
[c]{ll}%
Q_H[u_{k}]=0, & \text{on}\ B_{k}\\
u_{k}|_{\partial B_{k}}=F_{k}=F|_{\partial B_{k}}. &
\end{array}
\right.  \label{solk}%
\end{equation}
In order to prove the existence of the $u_k$'s, we will need some a priori height estimate. More precisely, we claim that given some $k\geq 2$, there is a constant $C_{j}$ depending
only on $j$ such that if $u_{k}$ is a solution of (\ref{solk}) and $j\leq k$
then $\sup_{B_{j}}\left\vert u_{k}\right\vert \leq C_{j}$ $.$ 
Let us prove the claim. We choose two open subsets $U_{\pm}$ of
$\gamma\left(  \pm\infty\right)  $ in $\partial_{\infty}M$. Using the SC condition, we obtain the existence of two $C^{2}$
convex subsets $W_{\pm}$ of $M$ such that $\partial_{\infty}W_{\pm}\subset
U_{K_{\pm}}.$ Denote by $K_{\pm}$ the hypersurfaces $K_{\pm}=\partial W_{\pm
}.$ As observed in Lemma \ref{rmksc} and since $|H|<\sqrt{\alpha}$, we may assume that $K_{\pm}$ are $H_{0}$
mean convex with $H_{0}\geq H.$ We then choose $C_{j}$ such that the orbit of
$\left\{  \varphi_{t}\right\}  $ through a point of $B_{j}$ intersects
$W_{\pm}$ for some $t\geq C_{j}$. It is clear that we may assume that the
Killing graph of $F$ does not intersect $K_{\pm}.$ The claim then follows from the tangency principle.

We have two important consequences of the previous height estimate. The first one is that problem (\ref{solk})
is solvable for any $k\geq 2.$ In fact, the only missing hypothesis to apply
Theorem 1 of \cite{DL} to guarantee the solvability of (\ref{solk}) are on the
Ricci curvature of $M$ and on the mean curvature of the Killing cylinder over the boundary of $B_k$ (see \cite{DL}, Theorem 1)$.$ Concerning the hypothesis on the mean curvature of the Killing cylinder over the boundary of $B_k$, we claim that it holds true in our setting. Indeed, since the orbits of $\varphi_{t}$ are equidistant curves of $\gamma$, it
follows that the Killing cylinder $K_{k}$ over $\partial B_{k}$ is an
equidistant hypersurface of $\gamma.$ Therefore the mean curvature $H_{K_{k}}$
of $K_{k}$ with respect to the inner normal vector field of $K_{k}$ coincides
with the Laplacian of the distance to $\gamma.$ One may then apply the hessian
comparison theorem to obtain%
\[
H_{K_{k}}\geq   \sqrt{\alpha}\tanh\left(  k\sqrt{\alpha
}\right)  \geq H.
\]

 Moreover, a direct
inspection on the proof of Theorem 1 of \cite{DL} shows that the hypothesis on the Ricci curvature is only
used to obtain a priori height estimates, which we just obtained above.

Secondly, the a priori height estimates we obtained above, Theorem
\ref{intgrad} and classical Schauder estimate for linear elliptic PDE
(see \cite{GT}) guarantee the compactness of the sequence of solutions $\left\{
u_{k}\right\}  $ on compact subsets of $M.$ Then, by the diagonal method, the
sequence $\left\{  u_{k}\right\}  $ contains a subsequence converging
uniformally in $C^{2}$ norm on compact subsets of $M$ to a global solution
$u\in C^{\infty}\left(  P\right)  $ of $Q_H[u]=0,$ where $|H|<\sqrt{\alpha}.$ It remains to show that $u$
extends continuously to $\partial_{\infty}P$ and that $u|_{\partial_{\infty}%
P}=\phi$. 

Let $(x_{k})_{k}$ be a sequence of points of $P$ converging to $x\in
\partial_{\infty}P$. Since $\overline{P}$ is compact, there exists a
subsequence $\varphi(u(x_{k_{j}}),x_{k_{j}})$ of $\varphi(u(x_{k}),x_{k})$ which
converges to $z\in\overline{P}$. Since $x_{k}$ diverges and $\varphi(u(x_{k_{j}%
}),x_{k_{j}})\in Gr(u)$, we have that $z\in\partial_{\infty}Gr(u)$. We claim
that $z\in Gr(\phi)$. To prove the claim, we will show that if $z\in
\partial_{\infty}M\backslash Gr(\phi)$ then $z\notin\partial_{\infty}Gr(u)$.
Let $z\in\partial_{\infty}M\backslash Gr(\phi)$. Since $Gr(\phi)$ is compact
and $z\notin Gr(\phi)$, using the SC condition, we
can find an hypersurface $E\subset M$ such that $\partial_{\infty}E$ separates
$z$ and $Gr(\phi)$. Moreover, using Lemma \ref{rmksc}, the mean curvature of $E$ with respect to the
unit normal vector field pointing to the connected component $U$ of
$M\backslash E$ whose asymptotic boundary contains $Gr(\phi)$, is larger or equal to
$h$ for $h< \sqrt{\alpha}$. Since $u_{k}|_{\partial B_{k}}\underset{k\rightarrow\infty
}{\longrightarrow}\phi$, there exists $k_{0}$ such that, for all $k\geq k_{0}%
$, $\partial Gr(u_{k})\subset U$ and $\partial Gr(u_{k})\cap E=\varnothing$.
By the tangency principle and using that $|H|<\sqrt{\alpha}$, we deduce that, for all $k\geq k_{0}$,
$Gr(u_{k})\subset U$. It follows that $z\notin\partial_{\infty}Gr(u)$. This
proves the claim i.e. $z\in Gr(\phi)$. In particular, it follows that $u$ is bounded.

Now, since $\partial_{\infty}Gr(u)\subset Gr(\phi)$, there exists $x_{0}%
\in\partial_{\infty}P$ such that $z=\varphi(u(x_{0}),x_{0})$. Using that $u$ is
bounded, we deduce there exists a subsequence $\{u(x_{k_{j_{i}}})\}$ which
converges to some $t_{0}\in\mathbb{R}$. It follows from the fact that the extension of
$\varphi_{t_{0}}$ to $\overline{P}$ is continuous that
\[
z=\lim_{i\rightarrow\infty}\varphi(u(x_{k_{j_{i}}}),x_{k_{j_{i}}})=\varphi(t_{0},x).
\]
Since $\varphi:\mathbb{R}\times\partial_{\infty}P\rightarrow\partial_{\infty}M$ is
injective, we deduce that $t_{0}=\phi(x_{0})$ and $x_{0}=x$. Since this last
fact holds true for every converging subsequences, we have proved that
$u(x_{k})\underset{k\rightarrow\infty}{\longrightarrow}\phi(x)$. This
concludes the proof of Theorem \ref{main}.
\end{proof}
\bigskip

\bibliographystyle{acm}
\bibliography{referenceskillingsub1}

\end{document}